\numberwithin{equation}{section}
\numberwithin{figure}{section}
\theoremstyle{plain}
\newtheorem{thm}{\protect\theoremname}
  \theoremstyle{plain}
  \newtheorem{lem}[thm]{\protect\lemmaname}
  \providecommand{\lemmaname}{Lemma}
\providecommand{\theoremname}{Theorem}
\begin{document}

\title[An active-set mixed finite element solver for a lubrication problem]{An active-set mixed finite element solver for a transient hydrodynamic
lubrication problem in the presence of cavitation. }

\author{moulay hicham tber}

\date{\today}

\address{Cadi Ayyad University, F.S.T.G. B.P 549, Department of Mathematics,
Av. Abdelkarim Elkhattabi, Marrakech, Morocco.}

\keywords{Cavitation, Elrod-Adams model, characteristics, mixed formulation,
active set solver.}

\subjclass[2000]{76T10, 35L87, 65M25, 65N30, 49M15}

\email{hicham.tber@uca.ac.ma}
\begin{abstract}
In this paper we study a moving free boundary problem related to the
the cavitation modeling in lubricated devices. More precisely, a characteristics
method combined with a weak formulation in a mixed form is introduced
for the Elrod-Adams model. The formulation is suitable for the use
of mixed finite element methods in the numerical approximation. It
is proved that the time-discrete problem and its finite element discretization
has a unique solution. Further an efficient primal-dual active-set
strategy is proposed to solve the resulting algebraic system. The
performance of the overall algorithm is illustrated by numerical examples.
\end{abstract}

\maketitle

\section{Introduction}
For a long time, Reynolds equation has been used to describe the behavior
of a viscous flow inside the lubricated devices. Nevertheless, Reynolds
modeling approach does not take into account the rupture of the continuous
lubricant film due to the formation of air bubbles. This phenomenon,
called cavitation, is not only important because its onset and extent
determine the performance of the lubricated device but also because
vapor cavitation collapse (implosion) can cause severe surface material
damage \cite{San-Andreas}. Thus, various models have been used in
order to make the Reynolds equation valid in the cavitation area.
The Elrod-Adams modeling approach \cite{ElrodAdams}, here adopted,
is one of the most realistic approaches taking account of the cavitation
phenomena. In this model, the cavitation region is considered as a
fluid-air mixture and an additional unknown representing the saturation
of the fluid in the mixture is introduced. The study of this model
has given place to many works covering some fundamental and applied
works (see \cite{Bay-Vaz,Gropper} and the references within).

Our motivation in this paper is to design an efficient numerical solution
algorithm based on an appropriate time-space discretization for the
Elrod-Adams cavitation model. By considering flow along the characteristics
we derive a mixed variational formulation of the problem. We rewrite
the semi-discrete governing equations in the form of system of first
order equations in which the flux is treated as an independent variable.
We mention here that a such approach has been already used for advection-dominated
transport problems \cite{Arbogast}. Here it is shown that the semi-discrete
problem has a unique solution by using Shauder's fixed point theorem
and a regularization technique. Moreover, the weak solution is approximated
by a mixed finite element (MFE) method. A primal-dual active set strategy
which is equivalent to a semi-smooth Newton algorithm is used to solve
the complementarity-saddle point problem arising from the characteristics-MFE
discretization. \\

The rest of this paper is organized as follows: In section 2 the mathematical
formulation of the problem is presented. The set of equations defines
a nonlinear advection diffusion free boundary problem where the free
boundary separates the lubricated and cavitated regions. In section
3, we approximate the hyperbolic part of the equation along the characteristics.
In section 4, we derive a weak formulation of the semi-discrete problem
in a mixed form and we prove the existence of one weak solution. In
the same section we approximate the weak solution by a mixed finite
element method. In section 5, we design a solution algorithm for the
resulting nonlinear algebraic equation. Numerical experiments are
reported in section 6.

\section{the mathematical model}

Studying the cavitation phenomenon in lubricated devices gives place
to a mathematical formulation in a domain $\Omega\subset\mathbb{R}^{n}$
with $n=1,\,2$ of regular boundary $\Gamma=\Gamma_{i}\cup\Gamma_{e}$
with $\Gamma_{i}\cap\Gamma_{e}=\emptyset$. According to Elrod-Adams
model, the unknowns of the problem are the pressure $p(x,t)$ and
the relative content $\theta(x,t)$ of oil film with $x\in\Omega$
and $t_{0}\leq t\leq T$. When the lubrication takes place by an incompressible
fluid of viscosity $\mu$, the pressure satisfies the Reynolds equation:
\begin{equation}
\dfrac{\partial h}{\partial t}-\nabla\left(\dfrac{h^{3}}{12\mu}\nabla p-\dfrac{h}{2}U\bm{e_{1}}\right)=0,\quad\theta=1,\quad\textrm{ in }\Omega^{+}=\{x\in\Omega\;|\;p(x)>p_{c}\}\label{ReynoldsEquation}
\end{equation}
and $\theta$ satisfies the conservation law 
\begin{equation}
\dfrac{\partial (h\theta)}{\partial t} + \nabla\left(\dfrac{h}{2}\theta U\bm{e_{1}}\right)=0,\qquad0\leq\theta\leq1,\qquad\textrm{in}\quad\Omega^{0}=\{x\in\Omega\;|\;p(x)=p_{c}\},\label{ConservationLaw}
\end{equation}
where $p_{c}\in\mathbb{R}$ is the cavitation pressure, $\bm{e_{1}}$
is the unit vector in the $x_{1}$-direction and $U(x,t)$ is the
relative sliding speed of the contact surfaces with $U\bm{e_{1}}$
being a divergence-free vector field. The function $h=h(x,\,t)$ that
represents the film thickness satisfies 
\begin{equation}
\exists h_{0},\,h_{1}\in\mathbb{R}_{+}^{*},\qquad\forall(x,\,t)\in\Omega\times\left]t_{0},\,T\right[,\qquad h_{0}\leq h(x,\,t)\leq h_{1}.\label{thikness}
\end{equation}
On the free boundary $\Sigma=\partial\Omega^{+}\cap\Omega,$ we have the conservation condition of the flux 
\[
(h^0 \theta^0 - h^+) \dfrac{U}{2}\bm{e_{1}}\cdot \mathbf{n} + \dfrac{(h^+)^{3}}{12\mu}\left(\dfrac{\partial p}{\partial\mathbf{n}}\right)^+ = (h^0 \theta^0 - h^+)\mathbf{V_\Sigma}\cdot\mathbf{n},
\]
where $\mathbf{n}$ is the unit vector normal on $\Sigma$ oriented outwards from $\Omega^+.$ $\mathbf{V_\Sigma}$ is the velocity of $\Sigma.$ The superscripts $0$ and $+$ refers to the limit values of the corresponding quantities as $\Sigma$ is approached from the cavitated and full film sides respectively.

The Reynolds equation $\eqref{ReynoldsEquation}$ and the conservation
law $\eqref{ConservationLaw}$ lead to the following equation valid
in both cavitation and lubricated region: 
\[
\dfrac{\partial\theta h}{\partial t}+\nabla\left(\dfrac{h}{2}\theta U\bm{e_{1}}\right)-\nabla(\dfrac{h^{3}}{12\mu}\nabla p)=0.
\]
The pressure $p$ and the concentration $\theta$ are related by 
\[
p\geq0\qquad0\leq\theta\leq1\qquad p(1-\theta)=0.
\]
Here and in the following, it is assumed without a loss of generality
that the cavtitaion pressure $p_{c}=0.$ Thus the strong formulation
of the problem is given by the following set of equations: 

\begin{equation}
\left\{ \begin{array}{c}
\dfrac{\partial(h\theta)}{\partial t}+\dfrac{U}{2}\bm{e_{1}}\cdot\nabla(h\theta)-\nabla\left(\dfrac{h^{3}}{12\mu}\nabla p\right)=0,\\
p\geq0,\quad p(1-\theta)=0,\quad0\leq\theta\leq1,\\
p|_{\Gamma_{i}}=p_{i},\qquad p|_{\Gamma_{e}}=p_{e},
\end{array}\right.\label{StrongForm}
\end{equation}
where $p_{i}=p_{i}(t)$ and $p_{e}=p_{e}(t)$ are given supply pressures.
\\
We supplement the above system by an appropriate initial condition
$\theta(t=t_{0})=\theta_{0}$ with $0\leq\theta_{0}(x)\leq1$ for
$x\in\Omega.$ 

\section{time discretization}

Due to the hyperbolic character of the governing equation in cavitated
areas, the numerical solutions of (\ref{StrongForm}) may exhibit
undesired oscillations. Following \cite{Bay-et-al}, one possible
solution to deal with this issue consists in writing $\dfrac{\partial(h\theta)}{\partial t}+\dfrac{U}{2}\bm{e_{1}}\cdot\nabla(h\theta)$
as $\dfrac{D(h\theta)}{Dt}$ the material derivative of $h\theta$
in the direction of $\dfrac{U}{2}\bm{e_{1}}.$ Then we can rewrite
the first equation in (\ref{StrongForm}) as 
\begin{equation}
\dfrac{D(h\theta)}{Dt}-\nabla\left(\dfrac{h^{3}}{12\mu}\nabla p\right)=0.\label{EqTotalDiv}
\end{equation}
The corresponding characteristic curves are defined by 
\[
\left\{ \begin{array}{c}
\dfrac{dX(x,\,t;\,s)}{ds}=\dfrac{U(x,\,s)}{2}\bm{e_{1}},\\
X(x,\,t;\,t)=x,
\end{array}\right.
\]
with $X(x,t;s)$ being the position of a particle at time $s,$ which
was or will be at $x$ at time $t$. \\
Now for a given uniform time step size $\tau>0,$ we can get an approximate
value of $X$ at $t^{old}=t-\tau$ by
\[
X(x,\,t;\,t^{old})=x-\tau\,\dfrac{U(x,\,t)}{2}\bm{e_{1}}.
\]
Using a fully-implicit scheme, we get the semi-discrete form of (\ref{StrongForm})
\begin{equation}
\left\{ \begin{array}{c}
h\theta-\nabla\left(\dfrac{\tau h^{3}}{12\mu}\nabla p\right)=h^{old}\theta^{old}\\
p\geq0,\quad p(1-\theta)=0,\quad0\leq\theta\leq1,\\
p|_{\Gamma_{i}}=p_{i},\qquad p|_{\Gamma_{e}}=p_{e},\\
\theta(t=0)=\theta_{0},
\end{array}\right.\label{time-discrete-eq}
\end{equation}
where $\theta^{old}(x,\,t)=\theta\left(X(x,\,t;\,t^{old}),\,t^{old}\right)$
and $h^{old}(x,\,t)=h\left(X(x,\,t;\,t^{old}),\,t^{old}\right).$
This time approximation will be combined with a spatial discretization
by a mixed finite element method. 

\section{mixed formulation }

In this section, we introduce a weak formulation for the semi-discrete
problem (\ref{time-discrete-eq}) in a mixed form. First we introduce
the Hilbert space
\[
H(div)=\{{\bf v}\in L^{2}(\Omega)^{n},\;\nabla\cdot\mathbf{v}\in L^{2}(\Omega)\}
\]
and the associated norm 
\[
\Vert\mathbf{v}\Vert_{H(div)}=\sqrt{\Vert\mathbf{v}\Vert_{0}^{2}+\Vert\nabla\cdot\mathbf{v}\Vert_{0}^{2}},
\]
where $\Vert\cdot\Vert_{0}$ denotes the standard $L^{2}$ norm for
either vector-valued or real-valued functions as appropriate. \\
Next we define the velocity $\mathbf{u}$ by the following relation:
\begin{equation}
\dfrac{12\mu}{\tau h^{3}}\mathbf{u}+\nabla p=0.\label{DefinitionVelocity}
\end{equation}
It follows from (\ref{time-discrete-eq}) that 
\begin{equation}
h\theta+\nabla\cdot\mathbf{u}=h^{old}\theta^{old}.\label{DivEq}
\end{equation}
By testing $\eqref{DefinitionVelocity}$ against any vector valued
$\mathbf{v}\in H(div),$ we obtain 
\begin{equation}
(\dfrac{12\mu}{\tau h^{3}}\mathbf{u},\;\mathbf{v})+(\nabla p,\;\mathbf{v})=0.
\end{equation}
Using Green's formula, one arrives at 
\begin{equation}
(\dfrac{12\mu}{\tau h^{3}}\mathbf{u},\;\mathbf{v})-(\nabla\cdot\mathbf{v},\,p)=-<p_{\Gamma},\,\mathbf{v}\cdot n>_{\Gamma_{D}},\label{EqU}
\end{equation}
where $(\cdot,\,\cdot)$ and $<\cdot,\,\cdot>_{\Gamma}$ indicates
the inner product on $L^{2}(\Omega)$ and $L^{2}(\Gamma)$ respectively
and $p_{\Gamma}$ denotes the trace of the pressure $p$ on the boundary
$\Gamma:$
\[
p_{\Gamma}=\left\{ \begin{array}{ll}
p_{i} & \quad\textrm{ on }\quad\Gamma_{i},\\
p_{e} & \quad\textrm{ on }\quad\Gamma_{e}.
\end{array}\right.
\]
Finally, by testing $\eqref{DivEq}$ against any $q\in L^{2}(\Omega)$
we obtain 
\begin{equation}
(h\theta,\,q)+(\nabla\cdot\mathbf{u},\,q)=(h^{old}\theta^{old},\,q).\label{EqP}
\end{equation}
Hence, the weak formulation of the problem consists in finding $p\in L^{2}(\Omega),$
$\theta\in L^{\infty}(\Omega)$ and $\mathbf{u}\in H(div)$ such that 

\begin{equation}
\left\{ \begin{array}{l}
(\dfrac{12\mu}{\tau h^{3}}\mathbf{u},\,\mathbf{v})-(\nabla\cdot\mathbf{v},\,p)=-<p_{\Gamma},\,\mathbf{v}\cdot n>_{\Gamma}\qquad\forall\mathbf{v}\in H(div),\\
(\nabla\cdot\mathbf{u},\,q)+(h\theta,\,q)=\left(h^{old}\theta^{old},\,q\right)\qquad\forall q\in L^{2}(\Omega),\\
p\geq0,\quad p(1-\theta)=0,\quad0\leq\theta\leq1,\qquad\textrm{ a.e. in }\Omega.
\end{array}\right.\label{WeakMixedProblem}
\end{equation}

To show that the weak formulation has a solution, let $H_{\varepsilon}$
be a regularization of the Heaviside operator $H$ given by 
\[
H_{\varepsilon}(x)=\left\{ \begin{array}{ll}
1 & \quad\textrm{ if }x\geq\varepsilon,\\
x/\varepsilon & \quad\textrm{ if }0\leq x\leq\varepsilon,\\
0 & \quad\textrm{ if }x\leq0.
\end{array}\right.
\]
Correspondingly, we consider the following regularized problem 
\begin{equation}
\left\{ \begin{array}{rcl}
\textrm{Find }p_{\varepsilon}\in L^{2}(\Omega)\textrm{ and }\mathbf{u_{\varepsilon}}\in H(div)\textrm{ such that }\\
p_{\varepsilon} & \geq & 0\qquad\textrm{ a.e. in }\Omega,\\
(\dfrac{12\mu}{\tau h^{3}}\mathbf{u_{\varepsilon}},\,\mathbf{v})-(\nabla\cdot\mathbf{v},\,p_{\varepsilon}) & = & -<p_{\Gamma},\,\mathbf{v}\cdot n>_{\Gamma}\qquad\forall\mathbf{v}\in H(div),\\
(\nabla\cdot\mathbf{u_{\varepsilon}},\,q)+(hH_{\varepsilon}(p_{\varepsilon}),\,q) & = & (h^{old}\theta^{old},\,q)\qquad\forall q\in L^{2}(\Omega).
\end{array}\right.\label{RegularizedProblem}
\end{equation}

\begin{lem}
The regularized problem $\eqref{RegularizedProblem}$ has a unique
solution $(\mathbf{u_{\varepsilon}},p_{\varepsilon}).$ Moreover there
exists a constant $C$ not depending on $\varepsilon$ such that 
\begin{equation}
\|\mathbf{u_{\varepsilon}}\|_{H(div)}+\|p_{\varepsilon}\|_{0}\leq C.\label{eq-Reg-Sol-Est}
\end{equation}
 
\end{lem}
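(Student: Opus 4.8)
The plan is to treat \eqref{RegularizedProblem} as a monotone perturbation of a classical mixed (Darcy-type) saddle point problem and combine the Babuška--Brezzi theory with a fixed-point/monotonicity argument. Concretely, write $a(\mathbf{u},\mathbf{v})=(\frac{12\mu}{\tau h^3}\mathbf{u},\mathbf{v})$ and $b(\mathbf{v},q)=-(\nabla\cdot\mathbf{v},q)$. By \eqref{thikness} the coefficient $\frac{12\mu}{\tau h^3}$ is bounded above and below by positive constants, so $a(\cdot,\cdot)$ is continuous and coercive on all of $H(div)$ (in particular on $\ker b$). The inf-sup condition for $b$ on $H(div)\times L^2(\Omega)$ is standard: given $q\in L^2(\Omega)$ one solves $-\Delta\varphi=q$ with suitable boundary data and sets $\mathbf{v}=\nabla\varphi$, giving $\nabla\cdot\mathbf{v}=-q$ and $\|\mathbf{v}\|_{H(div)}\lesssim\|q\|_0$. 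Thus for any fixed right-hand side in the second equation the pair $(\mathbf{u},p)$ exists and is unique with norm controlled by the data; this is the engine that will be invoked repeatedly.

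Next I would set up the nonlinear solve. Define a map $\mathcal{T}:L^2(\Omega)\to L^2(\Omega)$ by sending $\bar p$ to the pressure component $p$ of the solution of the \emph{linear} mixed problem where the second equation reads $(\nabla\cdot\mathbf{u},q)+(hH_\varepsilon(\bar p),q)=(h^{old}\theta^{old},q)$. Since $H_\varepsilon$ is bounded (values in $[0,1]$) and continuous, and $h\in[h_0,h_1]$, the right-hand side $hH_\varepsilon(\bar p)-h^{old}\theta^{old}$ lies in a fixed bounded set of $L^2(\Omega)$ independent of $\bar p$; hence by the Brezzi estimate the image of $\mathcal{T}$ lies in a fixed ball $B_R\subset L^2(\Omega)$. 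Continuity of $\mathcal{T}$ follows from continuity of $H_\varepsilon$ (dominated convergence gives $H_\varepsilon(\bar p_n)\to H_\varepsilon(\bar p)$ in $L^2$ when $\bar p_n\to\bar p$) plus the Lipschitz dependence of the linear mixed solution on its data. To get compactness — the hypothesis needed for Schauder — I would exploit extra regularity: the flux $\mathbf{u}$ satisfies $\nabla\cdot\mathbf{u}\in B_R$ and $\frac{12\mu}{\tau h^3}\mathbf{u}=-\nabla p$, so $p\in H^1(\Omega)$ with a bound depending only on $R$ and the data; the embedding $H^1(\Omega)\hookrightarrow\hookrightarrow L^2(\Omega)$ then makes $\mathcal{T}$ compact. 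Schauder's fixed point theorem yields a fixed point $p_\varepsilon$, and taking $\mathbf{u}_\varepsilon$ to be the associated flux solves \eqref{RegularizedProblem} except for the sign constraint.

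For the constraint $p_\varepsilon\ge 0$ a.e., I would argue by a maximum-principle / Stampacchia truncation test: test the combined weak equations with the pair built from $p_\varepsilon^-=\max(-p_\varepsilon,0)$ (i.e. choose $\mathbf v$ solving $\frac{12\mu}{\tau h^3}\mathbf v=-\nabla p_\varepsilon^-$ in the first equation and $q=-p_\varepsilon^-$ in the second), using that $H_\varepsilon(p_\varepsilon)=0$ on $\{p_\varepsilon<0\}$ and that the boundary data $p_\Gamma=p_i,p_e$ are nonnegative (supply pressures). This produces $\int \frac{12\mu}{\tau h^3}|\nabla p_\varepsilon^-|^2 + \int h|p_\varepsilon^-|^2\le 0$ up to the harmless $\theta^{old}$ term which vanishes against $p_\varepsilon^-$ after one checks signs, forcing $p_\varepsilon^-=0$. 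Uniqueness then follows by subtracting two solutions and testing with the difference: the nonlinear term contributes $(h(H_\varepsilon(p_\varepsilon^{(1)})-H_\varepsilon(p_\varepsilon^{(2)})),p_\varepsilon^{(1)}-p_\varepsilon^{(2)})\ge 0$ by monotonicity of $H_\varepsilon$, and coercivity of $a$ kills the flux difference, then inf-sup kills the pressure difference. Finally the $\varepsilon$-uniform bound \eqref{eq-Reg-Sol-Est}: test the first equation with $\mathbf v=\mathbf u_\varepsilon$ and the second with $q=p_\varepsilon$, add, and use $H_\varepsilon(p_\varepsilon)p_\varepsilon\ge 0$, $0\le H_\varepsilon\le 1$, $h\in[h_0,h_1]$ together with Young's inequality on the boundary term (controlled via a trace/inf-sup bound independent of $\varepsilon$) to absorb everything into $\|\mathbf u_\varepsilon\|_0^2$ and $\|p_\varepsilon\|_0^2$; then recover $\|\nabla\cdot\mathbf u_\varepsilon\|_0$ directly from the second equation.

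The step I expect to be the main obstacle is establishing the \emph{compactness} of $\mathcal T$ cleanly — i.e. getting the $H^1$ bound on $p_\varepsilon$ uniformly along the fixed-point iteration without circularity, and handling the nonhomogeneous Dirichlet data $p_\Gamma$ correctly in both the Schauder setup and the sign/uniqueness tests (one typically lifts the boundary data or works with the affine subspace, which complicates the truncation argument). Everything else is a routine assembly of Brezzi theory plus monotonicity, but the interplay of the free-boundary sign constraint with the mixed (rather than primal) formulation is where care is genuinely required.
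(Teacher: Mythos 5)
Your proposal follows essentially the same route as the paper: freeze $H_{\varepsilon}$ to define the linearized mixed map, apply Babuska--Brezzi theory for its well-posedness, recover $p\in H^{1}(\Omega)$ from the first equation to gain compactness, invoke Schauder, and obtain $p_{\varepsilon}\geq 0$ by a truncation test exploiting $H_{\varepsilon}(x)=0$ for $x\leq 0$, $h^{old}\theta^{old}\geq 0$ and nonnegative boundary data. Two minor differences are worth noting: for the sign argument the paper passes to the equivalent primal ($H^{1}$) formulation and tests with $p_{\varepsilon}^{-}$ there, which sidesteps the fact that your proposed flux test function $\mathbf{v}=-\frac{\tau h^{3}}{12\mu}\nabla p_{\varepsilon}^{-}$ need not lie in $H(div)$; and the paper obtains the $\varepsilon$-uniform bound directly from the Brezzi stability estimate (since $0\leq H_{\varepsilon}(p_{\varepsilon})\leq 1$ uniformly in $L^{\infty}$) rather than from your energy identity, which on its own does not control $\|p_{\varepsilon}\|_{0}$ and needs the additional inf-sup step you allude to. Your monotonicity-based uniqueness argument (subtract two solutions, test with the differences, use coercivity of $a$ and monotonicity of $H_{\varepsilon}$, then inf-sup for the pressure) is a genuine addition: the lemma asserts uniqueness but the paper's proof only establishes existence, so this step is exactly what is needed to complete the stated claim.
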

\begin{proof}
Consider the mapping $T$ which, for any $p_{\varepsilon}\in L^{2}(\Omega),$
associates $\tilde{p}_{\varepsilon}=T(p_{\varepsilon})$ the solution
of the following problem: 
\begin{equation}
\begin{array}{rcl}
(\dfrac{12\mu}{\tau h^{3}}\mathbf{\widetilde{\mathbf{u}}_{\mathbf{\varepsilon}}},\,\mathbf{v})-(\nabla\cdot\mathbf{v},\,\tilde{p}_{\varepsilon}) & = & -<p_{\Gamma},\,\mathbf{v}\cdot n>_{\Gamma}\qquad\forall\mathbf{v}\in H(div),\\
(\nabla\cdot\mathbf{\widetilde{\mathbf{u}}_{\mathbf{\varepsilon}}},\,q)+(hH_{\varepsilon}(p_{\varepsilon}),\,q) & = & (h^{old}\theta^{old},\,q)\qquad\forall q\in L^{2}(\Omega),
\end{array}\label{LinearizedRegularizedProblem}
\end{equation}
The problem $\eqref{LinearizedRegularizedProblem}$ has a unique solution
by the inf-sup condition of Brezzi and Babuska \cite{Babuska,Brezzi}.
Moreover, there exists a constant $C=C(\Omega,\,p_{\Gamma},\,h_{0},\,h_{1})$
such that 
\begin{equation}
\|\widetilde{\mathbf{u}}_{\mathbf{\varepsilon}}\|_{H(div)}+\|\tilde{p}_{\varepsilon}\|_{0}\leq C(\Omega,\,p_{\Gamma},\,h_{0},\,h_{1},\,\mu,\,\tau).\label{Estimate1}
\end{equation}
Here, we have used the fact that $(H_{\varepsilon}(\widetilde{\mathbf{u}}_{\mathbf{\varepsilon}}))_{\varepsilon}>0$
is bounded in $L^{\infty}(\Omega)$ independently of $\varepsilon.$ 

Notice that the first equation in $\eqref{LinearizedRegularizedProblem}$
implies that
\[
\nabla\tilde{p}_{\varepsilon}=-\dfrac{12\mu}{\tau h^{3}}\mathbf{\widetilde{\mathbf{u}}_{\mathbf{\varepsilon}}}
\]
is a function in $L^{2}(\Omega)^{2}.$ This together with the estimate
$\eqref{Estimate1}$ shows that $T(p_{\varepsilon})=\tilde{p}_{\varepsilon}\in H^{1}(\Omega)$
and 
\begin{equation}
\|T(p_{\varepsilon})\|_{1}\leq C(\Omega,\,p_{\Gamma},\,h_{0},\,h_{1},\,\mu,\,\tau).\label{Estimate2}
\end{equation}
The mapping $T$ is then bounded from $L^{2}(\Omega)$ to $H^{1}(\Omega).$
From the compact embedding of $H^{1}(\Omega)$ into $L^{2}(\Omega)$,
it follows that $T$ is completely continuous from $H^{1}(\Omega)$
to $L^{2}(\Omega).$ Moreover the estimate $\eqref{Estimate2}$ shows
that $T(B_{C})\subset B_{C}$ with $B_{C}$ being the $H^{1}(\Omega)$-ball
of radius $C(\Omega,\,p_{\Gamma},\,h_{0},\,h_{1},\,\mu,\,\tau).$
Schauder fixed point theorem yields the existence of a function $p_{\varepsilon}$
such that $T(p_{\varepsilon})=p_{\varepsilon}.$ The fixed point $p_{\varepsilon}$
together with the corresponding $\mathbf{u_{\varepsilon}},$ forms
a solution of the two equations in $\eqref{RegularizedProblem}$ satisfying
(\ref{eq-Reg-Sol-Est}) with $C=C(\Omega,\,p_{\Gamma},\,h_{0},\,h_{1},\,\mu,\,\tau).$

Next, we claim that $p_{\varepsilon}\geq0$ a.e. in $\Omega.$ Actually,
$p_{\varepsilon}$ is also a solution of the following problem:

\begin{equation}
\left\{ \begin{array}{rcl}
\textrm{Find }p_{\varepsilon}\in H^{1}(\Omega)\textrm{ such that }\\
p_{\varepsilon} & = & p_{\Gamma}\qquad\textrm{ on }\Gamma,\\
hH_{\varepsilon}(p_{\varepsilon})-div(\dfrac{\tau h^{3}}{12\mu}\nabla p_{\varepsilon}) & = & h^{old}\theta^{old}\qquad\forall q\in H_{0}^{1}(\Omega).
\end{array}\right.\label{UsualRegularizedProblem}
\end{equation}
Let $p_{\varepsilon}^{-}=min(0,\,p_{\varepsilon}).$ It is clear that
$p_{\varepsilon}^{-}\in H_{0}^{1}(\Omega).$ By choosing $q=p_{\varepsilon}^{-}$
in $\eqref{UsualRegularizedProblem}$ we arrive at 
\[
(\dfrac{\tau h^{3}}{12\mu}\nabla p_{\varepsilon},\,\nabla p_{\varepsilon}^{-})+(hH_{\varepsilon}(p_{\varepsilon}),\,p_{\varepsilon}^{-})=(h^{old}\theta^{old},\,p_{\varepsilon}^{-}).
\]
Thus, using the fact that $H_{\varepsilon}(x)=0$ for $x\leq0$ and
$h^{old}\theta^{old}\geq0$ we obtain 
\[
(h^{3}\nabla p_{\varepsilon}^{-},\,\nabla p_{\varepsilon}^{-})\leq0,
\]
which leads to $p_{\varepsilon}^{-}=0$ a.e. in $\Omega$ and then
$p_{\varepsilon}\geq0$ a.e. in $\Omega.$ Consequently, the solution
$(\mathbf{u_{\varepsilon}},\,p_{\varepsilon})$ is a solution of $\eqref{RegularizedProblem}$.
\end{proof}
\begin{thm}
There exists a unique triple $(\mathbf{u},\,p,\,\theta)\in H(div)\times L^{2}(\Omega)\times L^{\infty}(\Omega)$
which is a solution of the weak formulation $\eqref{WeakMixedProblem}.$
\end{thm}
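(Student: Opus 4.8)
The plan is to deduce existence by passing to the limit $\varepsilon\to 0$ in the regularized problems \eqref{RegularizedProblem} established in the preceding lemma, and to deduce uniqueness directly from an energy identity that exploits the monotone structure concealed in the complementarity relation $p(1-\theta)=0$.

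For existence, I would first gather the uniform bounds. By \eqref{eq-Reg-Sol-Est} the family $(\mathbf{u}_{\varepsilon},p_{\varepsilon})$ is bounded in $H(div)\times L^{2}(\Omega)$ independently of $\varepsilon$; the function $\theta_{\varepsilon}:=H_{\varepsilon}(p_{\varepsilon})$ takes values in $[0,1]$, hence is bounded in $L^{\infty}(\Omega)$; and, crucially, the estimate \eqref{Estimate2} bounds $p_{\varepsilon}=T(p_{\varepsilon})$ in $H^{1}(\Omega)$ uniformly in $\varepsilon$. Extracting a subsequence $\varepsilon_{k}\to 0$, one obtains $\mathbf{u}_{\varepsilon_{k}}\rightharpoonup\mathbf{u}$ in $H(div)$, $\theta_{\varepsilon_{k}}\rightharpoonup\theta$ weakly-$*$ in $L^{\infty}(\Omega)$, and, thanks to the compact embedding $H^{1}(\Omega)\hookrightarrow L^{2}(\Omega)$, $p_{\varepsilon_{k}}\to p$ strongly in $L^{2}(\Omega)$. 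Passing to the limit in the two linear equations of \eqref{RegularizedProblem} is then routine: the coefficients $\dfrac{12\mu}{\tau h^{3}}$ and $h$ lie in $L^{\infty}(\Omega)$ by \eqref{thikness}, so multiplying by them preserves the relevant weak or strong convergences, and $(\mathbf{u},p,\theta)$ satisfies the first two lines of \eqref{WeakMixedProblem}. The sign constraint $p\ge 0$ and the box constraint $0\le\theta\le 1$ pass to the limit because the corresponding subsets of $L^{2}(\Omega)$ are convex and closed, hence weakly closed.

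The main obstacle is the complementarity identity $p(1-\theta)=0$, which is the limit of a product of two sequences converging only weakly. My approach combines two observations. First, the elementary pointwise bound $0\le p_{\varepsilon}\bigl(1-H_{\varepsilon}(p_{\varepsilon})\bigr)\le\varepsilon$ a.e. (the product vanishes where $p_{\varepsilon}\ge\varepsilon$, and is at most $p_{\varepsilon}\le\varepsilon$ where $0\le p_{\varepsilon}\le\varepsilon$), which forces $p_{\varepsilon}(1-\theta_{\varepsilon})\to 0$ in $L^{1}(\Omega)$ since $\|p_{\varepsilon}(1-\theta_{\varepsilon})\|_{L^{1}(\Omega)}\le\varepsilon|\Omega|$. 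Second, for any $\varphi\in L^{\infty}(\Omega)$ the splitting $p_{\varepsilon_{k}}(1-\theta_{\varepsilon_{k}})\varphi=(1-\theta_{\varepsilon_{k}})\,(p_{\varepsilon_{k}}\varphi)$, together with the strong convergence $p_{\varepsilon_{k}}\varphi\to p\varphi$ in $L^{2}(\Omega)$ against the weak $L^{2}(\Omega)$ convergence $1-\theta_{\varepsilon_{k}}\rightharpoonup 1-\theta$, identifies the weak limit of $p_{\varepsilon_{k}}(1-\theta_{\varepsilon_{k}})$ as $p(1-\theta)$. Comparing the two limits yields $\displaystyle\int_{\Omega}p(1-\theta)\varphi=0$ for all $\varphi\in L^{\infty}(\Omega)$, hence $p(1-\theta)=0$ a.e., which completes the existence part. (Different subsequences a priori yield different limits $\theta$; the uniqueness argument below removes this ambiguity.)

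For uniqueness, I would take two solutions $(\mathbf{u}_{i},p_{i},\theta_{i})$, $i=1,2$, set $\mathbf{u}=\mathbf{u}_{1}-\mathbf{u}_{2}$ and $p=p_{1}-p_{2}$, subtract the equations, test the first with $\mathbf{v}=\mathbf{u}$ and the second with $q=p$, and add, to get
\[
\left(\dfrac{12\mu}{\tau h^{3}}\mathbf{u},\,\mathbf{u}\right)+\bigl(h(\theta_{1}-\theta_{2}),\,p_{1}-p_{2}\bigr)=0.
\]
The crucial point is that $p_{i}(1-\theta_{i})=0$ gives $\theta_{i}p_{i}=p_{i}$, so that, a.e.,
\[
(\theta_{1}-\theta_{2})(p_{1}-p_{2})=p_{1}(1-\theta_{2})+p_{2}(1-\theta_{1})\ge 0,
\]
using $p_{i}\ge 0$ and $\theta_{i}\le 1$; multiplying by $h\ge h_{0}>0$ makes the second term above nonnegative. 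Since $\dfrac{12\mu}{\tau h^{3}}\ge\dfrac{12\mu}{\tau h_{1}^{3}}>0$, this forces $\mathbf{u}=0$, i.e. $\mathbf{u}_{1}=\mathbf{u}_{2}$. The first equation then reduces to $(\nabla\cdot\mathbf{v},p)=0$ for all $\mathbf{v}\in H(div)$; since $\nabla\cdot$ maps $H(div)$ onto $L^{2}(\Omega)$ (for $f\in L^{2}(\Omega)$ take $\mathbf{v}=\nabla\psi$ with $\psi\in H^{1}_{0}(\Omega)$ solving $\Delta\psi=f$), equivalently by the inf-sup condition already invoked in the lemma, this gives $p=0$. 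Finally, with $\nabla\cdot\mathbf{u}=0$ the second equation becomes $(h(\theta_{1}-\theta_{2}),q)=0$ for all $q\in L^{2}(\Omega)$, so $h(\theta_{1}-\theta_{2})=0$ a.e. and thus $\theta_{1}=\theta_{2}$ a.e. since $h\ge h_{0}>0$. I expect the only genuine difficulty to be the weak-strong passage to the limit in the complementarity relation; this is precisely why the uniform $H^{1}$ bound on $p_{\varepsilon}$ from \eqref{Estimate2}, and the ensuing strong $L^{2}$ compactness, are indispensable.
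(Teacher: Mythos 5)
Your proposal is correct. The existence part follows essentially the same route as the paper: pass to the limit $\varepsilon\to0$ in \eqref{RegularizedProblem} using the uniform $H(div)\times L^{2}$ bound \eqref{eq-Reg-Sol-Est}, the uniform $H^{1}$ bound \eqref{Estimate2} with the compact embedding $H^{1}(\Omega)\hookrightarrow L^{2}(\Omega)$ to get strong $L^{2}$ convergence of $p_{\varepsilon}$, weak-$*$ $L^{\infty}$ convergence of $H_{\varepsilon}(p_{\varepsilon})$, and then identify $p(1-\theta)$ as the limit of $p_{\varepsilon}(1-H_{\varepsilon}(p_{\varepsilon}))$, which the bound $0\le p_{\varepsilon}(1-H_{\varepsilon}(p_{\varepsilon}))\le\varepsilon$ forces to be $0$; this is exactly the paper's weak--strong product argument, only phrased with $L^{1}$ control and $L^{\infty}$ test functions instead of the paper's $L^{2}$ duality, an immaterial difference.

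Where you genuinely depart from the paper is uniqueness. The paper observes that the complementarity system \eqref{eq-complementarity} is equivalent to the variational inequality \eqref{eq-VI}, so that $(\mathbf{u},p)$ solves the mixed variational inequality \eqref{Mixed-VI}, and then cites the uniqueness result of Brezzi--Hager--Raviart, recovering $\theta$ afterwards from the second equation and $h\ge h_{0}>0$. You instead give a direct, self-contained energy argument: subtracting two solutions, testing with $\mathbf{u}_{1}-\mathbf{u}_{2}$ and $p_{1}-p_{2}$, and using the pointwise monotonicity $(\theta_{1}-\theta_{2})(p_{1}-p_{2})=p_{1}(1-\theta_{2})+p_{2}(1-\theta_{1})\ge0$ extracted from $p_{i}\ge0$, $\theta_{i}\le1$, $p_{i}(1-\theta_{i})=0$; the uniform positivity of $12\mu/(\tau h^{3})$ then gives $\mathbf{u}_{1}=\mathbf{u}_{2}$, surjectivity of $\nabla\cdot:H(div)\to L^{2}(\Omega)$ (your construction via $\Delta\psi=f$, $\psi\in H^{1}_{0}(\Omega)$, is the inf-sup condition in disguise) gives $p_{1}=p_{2}$, and $h\ge h_{0}>0$ gives $\theta_{1}=\theta_{2}$. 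I checked the algebra and it is sound; in effect you reprove the relevant case of the cited theorem, treating the whole triple at once. What your route buys is independence from the external reference and full transparency about which structural facts are used (monotonicity of the $p$--$\theta$ relation, coercivity of the weighted mass form, surjectivity of the divergence, positivity of $h$); what the paper's route buys is the variational-inequality reformulation itself, which it reuses verbatim for the discrete problem \eqref{DiscreteVI} and as the basis of the active-set solver, and which connects directly to the error-estimate framework of Brezzi--Hager--Raviart.
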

\begin{proof}
Proof. For any $\varepsilon>0,$ let $(\mathbf{u_{\varepsilon}},\,p_{\varepsilon})$
be the solution of $\eqref{RegularizedProblem}$. From (\ref{eq-Reg-Sol-Est})
we can find a subsequence, also denoted $(\mathbf{u_{\varepsilon}},\,p_{\varepsilon}),$
such that 
\[
\begin{array}{rl}
p_{\varepsilon}\rightharpoonup p & \textrm{ in }H^{1}(\Omega),\\
p_{\varepsilon}\longrightarrow p & \textrm{ in }L^{2}(\Omega),\\
\mathbf{u_{\varepsilon}}\rightharpoonup\mathbf{u} & \textrm{ in }H(div),\\
H_{\varepsilon}(p_{\varepsilon})\rightharpoonup\theta & \textrm{ in }L^{2}(\Omega),\\
H_{\varepsilon}(p_{\varepsilon})\overset{*}{\rightharpoonup}\theta & \textrm{ in }L^{\infty}(\Omega).
\end{array}
\]
By passing to the limit in $\eqref{RegularizedProblem}$, we deduce
that 
\begin{equation}
\left\{ \begin{array}{rcl}
p & \geq & 0\qquad\textrm{ a.e. in }\Omega,\\
(\dfrac{12\mu}{\tau h^{3}}\mathbf{u},\,\mathbf{v})-(\nabla\cdot\mathbf{v},\,p) & = & -<p_{\Gamma},\,\mathbf{v}\cdot n>_{\Gamma}\\
(\nabla\cdot\mathbf{u},\,q)+(h\theta,\,q) & = & (h^{old}\theta^{old},\,q).
\end{array}\right.
\end{equation}
To complete the proof of existence of a solution for the initial problem
$\eqref{WeakMixedProblem}$, it remains to prove that $0\leq\theta\leq1$
and $p(1-\theta)=0$ a.e. in $\Omega.$ \\
First, since 
\[
H_{\varepsilon}(p_{\varepsilon})\overset{*}{\rightharpoonup}\theta\textrm{ in }L^{\infty}(\Omega),
\]
we have 
\[
1-H_{\varepsilon}(p_{\varepsilon})\overset{*}{\rightharpoonup}1-\theta\textrm{ in }L^{\infty}(\Omega).
\]
Hence 
\[
\|\theta\|_{\infty}\leq\liminf H_{\varepsilon}(p_{\varepsilon})\leq1\textrm{\quad and}\quad\|1-\theta\|_{\infty}\leq\liminf(1-H_{\varepsilon}(p_{\varepsilon}))\leq1,
\]
i.e. 
\[
0\leq\theta\leq1.
\]
Furthermore, one has 
\begin{equation}
p_{\varepsilon}(1-H_{\varepsilon}(p_{\varepsilon}))\rightharpoonup p(1-\theta)\textrm{ in }L^{2}(\Omega).\label{p(1-theta)}
\end{equation}
Indeed, for $\phi$ in $L^{2}(\Omega)$ we have 
\[
\begin{array}{lcl}
(p_{\varepsilon}(1-H_{\varepsilon}(p_{\varepsilon}))-p(1-\theta),\:\phi) & = & ((p_{\varepsilon}-p)(1-H_{\varepsilon}(p_{\varepsilon})),\:\phi)+(p(\theta-H_{\varepsilon}(p_{\varepsilon})),\:\phi)\\
 & \leq & \|p_{\varepsilon}-p\|_{0}\|\phi\|_{0}+|((\theta-H_{\varepsilon}(p_{\varepsilon})),\:p\phi)|.
\end{array}
\]
Since $p\phi\in L^{1}(\Omega),$ we get $\eqref{p(1-theta)}$ from
the $L^{2}(\Omega)$ strong convergence of $p_{\varepsilon}$ to $p$
and the $L^{\infty}(\Omega)$ weak-{*} convergence of $H_{\varepsilon}(p_{\varepsilon})$
to $\theta.$

On the other hand, from $H_{\varepsilon}$ expression, we have 
\[
(p_{\varepsilon}(1-H_{\varepsilon}(p_{\varepsilon})),\:\phi)\leq\varepsilon\|\phi\|_{0},
\]
whence 
\begin{equation}
p_{\varepsilon}(1-H_{\varepsilon}(p_{\varepsilon}))\rightharpoonup0\textrm{ in }L^{2}(\Omega).
\end{equation}
Consequently, from the uniqueness of the limit in $L^{2}(\Omega),$
we deduce that $p(1-\theta)=0$ a.e. in $\Omega.$ \\
Finally, notice that the complementarity system 
\begin{equation}
\left\{ \begin{array}{l}
\nabla\cdot\mathbf{u}+h\theta=h^{old}\theta^{old}\qquad\text{ in }L^{2}(\Omega),\\
p\geq0,\quad p(1-\theta)=0,\quad1-\theta\geq0,\qquad\textrm{ a.e. in }\Omega,
\end{array}\right.\label{eq-complementarity}
\end{equation}

and the variational inequality 
\begin{equation}
\left\{ \begin{array}{l}
p\in L_{+}^{2}(\Omega):=\left\{ q\in L^{2}(\Omega):\quad q\geq0\right\} ,\\
(\nabla\cdot\mathbf{u},\,p-q)\leq(h^{old}\theta^{old}-h,\,p-q),\qquad\forall q\in L_{+}^{2}(\Omega)
\end{array}\right.\label{eq-VI}
\end{equation}
are equivalent. Therefore, for any $(\mathbf{u},\,p,\,\theta)$ satisfying
(\ref{WeakMixedProblem}), $(\mathbf{u},\,p)$ is a solution of the
the following mixed variational inequality
\begin{equation}
\left\{ \begin{array}{l}
p\in L_{+}^{2}(\Omega):=\left\{ q\in L^{2}(\Omega):\quad q\geq0\right\} ,\\
(\dfrac{12\mu}{\tau h^{3}}\mathbf{u},\,\mathbf{v})-(\nabla\cdot\mathbf{v},\,p)=-<p_{\Gamma},\,\mathbf{v}\cdot n>_{\Gamma},\qquad\forall\mathbf{v}\in H(div),\\
(\nabla\cdot\mathbf{u},\,p-q)\leq(h^{old}\theta^{old}-h,\,p-q),\qquad\forall q\in L_{+}^{2}(\Omega),
\end{array}\right.\label{Mixed-VI}
\end{equation}

which has a at most one solution by virtue of \cite[Theorem 2.1]{Brezzi-al}.
The uniqueness of $\theta$ follows from (\ref{eq-complementarity})
and (\ref{thikness}).
\end{proof}
The weak formulation $\eqref{WeakMixedProblem}$ allows us to approximate
the underlying lubrication problem by using mixed finite element methods.
To this end, let $\mathcal{T}_{h}$ be a finite element partition
of $\Omega$ into triangles. We consider a stable pair $(\mathcal{V}_{h},\,\mathcal{Q}_{h})$
of spaces on $\mathcal{T}_{h}$ such that $p_{h}\in\mathcal{Q}_{h}\subset L^{2}(\Omega)$
and $\mathbf{u}_{h}\in\mathcal{V}_{h}\subset H(div).$ 

Our mixed finite element problem consists in finding a triple $(p_{h},\,\mathbf{u}_{h},\,\theta_{h})$
in $\mathcal{Q}_{h}\times\mathcal{V}_{h}\times\mathcal{Q}_{h}$ satisfying
\begin{equation}
\left\{ \begin{array}{l}
(\dfrac{12\mu}{\tau h^{3}}\mathbf{u}_{h},\,\mathbf{v}_{h})-(\nabla\cdot\mathbf{v}_{h},\,p_{h})=-<p_{\Gamma},\,\mathbf{v}_{h}\cdot n>_{\Gamma}\qquad\forall\mathbf{v}_{h}\in\mathcal{V}_{h},\\
(\nabla\cdot\mathbf{u}_{h},\,q_{h})+(h\theta_{h},\,q_{h})=(h^{old}\theta_{h}^{old},\,q_{h})\qquad\forall q_{h}\in\mathcal{Q}_{h},\\
p_{h}\geq0,\quad p_{h}(1-\theta_{h})=0,\quad0\leq\theta_{h}\leq1,\qquad\textrm{ a.e. in }\Omega.
\end{array}\right.\label{DiscreteWeakMixedProblem}
\end{equation}

\begin{thm}
The discrete problem $\eqref{DiscreteWeakMixedProblem}$ has a unique
solution.
\end{thm}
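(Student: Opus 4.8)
The plan is to mimic, in the finite-dimensional setting, the two-step argument used for Lemma~1 and Theorem~1. For existence I would first introduce a \emph{regularized discrete problem}: with $H_{\varepsilon}$ the regularized Heaviside of Lemma~1, seek $(p_{h,\varepsilon},\mathbf{u}_{h,\varepsilon})\in\mathcal{Q}_{h}\times\mathcal{V}_{h}$ solving the two equalities of \eqref{DiscreteWeakMixedProblem} with $\theta_{h}$ replaced by $H_{\varepsilon}(p_{h,\varepsilon})$ (or by its $L^{2}$-projection onto $\mathcal{Q}_{h}$, which does not affect the weak equations, if one does not want to assume $H_{\varepsilon}(\mathcal{Q}_{h})\subseteq\mathcal{Q}_{h}$). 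As in \eqref{LinearizedRegularizedProblem}, the linear saddle-point problem obtained by freezing the argument of $H_{\varepsilon}$ has a unique solution thanks to the assumed stability (discrete inf--sup condition) of the pair $(\mathcal{V}_{h},\mathcal{Q}_{h})$, so the solution map $T_{h}$ is well defined and continuous on $\mathcal{Q}_{h}$; since $0\le H_{\varepsilon}\le 1$, the a priori bound $\|\mathbf{u}_{h,\varepsilon}\|_{H(div)}+\|p_{h,\varepsilon}\|_{0}\le C$ holds with $C$ independent of $\varepsilon$, so $T_{h}$ maps a sufficiently large ball of $\mathcal{Q}_{h}$ into itself. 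Brouwer's fixed point theorem (the finite-dimensional counterpart of the Schauder argument in Lemma~1) then produces a fixed point, i.e.\ a solution of the regularized discrete problem.

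Because everything now lives in the finite-dimensional space $\mathcal{Q}_{h}\times\mathcal{V}_{h}\times\mathcal{Q}_{h}$ and $C$ does not depend on $\varepsilon$, a subsequence converges to a limit $(p_{h},\mathbf{u}_{h},\theta_{h})$ with $\theta_{h}=\lim_{\varepsilon}H_{\varepsilon}(p_{h,\varepsilon})$; passing to the limit in the two equalities is immediate, and the pointwise relations $0\le\theta_{h}\le 1$ and $p_{h}(1-\theta_{h})=0$ follow exactly as in Theorem~1, using $0\le H_{\varepsilon}\le 1$ and $|x\,(1-H_{\varepsilon}(x))|\le\varepsilon$. The one point that does not transcribe verbatim is the nonnegativity $p_{h}\ge 0$: in the mixed setting one cannot test the discrete divergence equation against the negative part $p_{h,\varepsilon}^{-}$ as is done in Lemma~1, since $p_{h,\varepsilon}^{-}$ need not belong to $\mathcal{Q}_{h}$. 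I expect this to be the main obstacle; it is resolved either by noting that for the lowest-order Raviart--Thomas pair $p_{h,\varepsilon}^{-}\in\mathcal{Q}_{h}$, so the argument of Lemma~1 applies, or, in general, by formulating the regularized discrete problem from the start as a variational inequality over the discrete cone $K_{h}=\{q_{h}\in\mathcal{Q}_{h}:\,q_{h}\ge 0\}$, which builds $p_{h}\ge 0$ into the construction.

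For uniqueness I would argue as at the end of the proof of Theorem~1: the discrete complementarity system is equivalent to the discrete mixed variational inequality obtained by replacing $H(div)$, $L^{2}(\Omega)$, $L^{2}_{+}(\Omega)$ in \eqref{Mixed-VI} by $\mathcal{V}_{h}$, $\mathcal{Q}_{h}$, $K_{h}$. Using the coercivity of the bilinear form $\mathbf{v}\mapsto(\frac{12\mu}{\tau h^{3}}\mathbf{u},\,\mathbf{v})$ (on the discrete divergence-free subspace) together with the discrete inf--sup condition, \cite[Theorem~2.1]{Brezzi-al} --- or a direct computation, subtracting two solutions and testing the difference against itself --- gives uniqueness of $(p_{h},\mathbf{u}_{h})$, and uniqueness of $\theta_{h}$ then follows from the second equation of \eqref{DiscreteWeakMixedProblem} together with the lower bound $h\ge h_{0}>0$ from \eqref{thikness}.
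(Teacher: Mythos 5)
Your proposal follows essentially the same route as the paper, whose proof simply states that the result is obtained ``similarly to the previous one'' via the discrete regularized problem \eqref{DiscreteRegularizedProblem} and the discrete mixed variational inequality \eqref{DiscreteVI}: regularization with $H_{\varepsilon}$, solvability of the frozen saddle-point problem by the discrete inf--sup condition, a fixed-point argument (Brouwer in place of Schauder), passage to the limit $\varepsilon\to 0$, and uniqueness through the discrete analogue of \eqref{Mixed-VI} together with $h\geq h_{0}>0$. Your remark on the nonnegativity $p_{h}\geq 0$ --- that $p_{h,\varepsilon}^{-}$ need not lie in $\mathcal{Q}_{h}$, so one must either use piecewise constants (as with $RT_{0}$) or pose the regularized discrete problem over the cone $\mathcal{Q}_{h}^{+}$ --- is a legitimate detail that the paper's terse proof leaves implicit, and your treatment of it is sound.
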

\begin{proof}
By considering the following discrete regularized problem 
\begin{equation}
\left\{ \begin{array}{rcl}
\textrm{Find }p_{h,\varepsilon}\in\mathcal{Q}_{h}\textrm{ and }\mathbf{u_{h,\varepsilon}}\in\mathcal{V}_{h}\textrm{ such that }\\
p_{h,\varepsilon} & \geq & 0\qquad\textrm{ a.e. in }\Omega,\\
(\dfrac{12\mu}{\tau h^{3}}\mathbf{u_{h,\varepsilon}},\,\mathbf{v}_{h})+(\nabla\cdot\mathbf{v}_{h},\,p_{h,\varepsilon}) & = & -<p_{\Gamma},\,\mathbf{v}_{h}\cdot n>_{\Gamma}\qquad\forall\mathbf{v}_{h}\in\mathcal{V}_{h},\\
(\nabla\cdot\mathbf{u_{h,\varepsilon}},\,q_{h})+(hH_{\varepsilon}(p_{h,\varepsilon}),\,\mathbf{v}_{h}) & = & (h^{old}\theta^{old},\,q_{h})\qquad\forall q_{h}\in\mathcal{Q}_{h},
\end{array}\right.\label{DiscreteRegularizedProblem}
\end{equation}
this theorem can be proven similarly to the previous one. In particular
it can be shown that the solution of $\eqref{DiscreteWeakMixedProblem}$
is given by the unique solution of the following discrete mixed variational
inequality
\begin{equation}
\left\{ \begin{array}{l}
p_{h}\in\mathcal{Q}_{h}^{+}:=\left\{ q_{h}\in\mathcal{Q}_{h}:\quad q_{h}\geq0\right\} ,\\
(\dfrac{12\mu}{\tau h^{3}}\mathbf{u}_{h},\,\mathbf{v}_{h})-(\nabla\cdot\mathbf{v}_{h},\,p_{h})=-<p_{\Gamma},\,\mathbf{v}_{h}\cdot n>_{\Gamma}\qquad\forall\mathbf{v}_{h}\in\mathcal{V}_{h},\\
(\nabla\cdot\mathbf{u}_{h},\,p_{h}-q_{h})+(h\theta_{h},\,p_{h}-q_{h})\leq(h^{old}\theta^{old},\,p_{h}-q_{h})\qquad\forall q_{h}\in\mathcal{Q}_{h}^{+}.
\end{array}\right.\label{DiscreteVI}
\end{equation}
\end{proof}

\section{Active set strategy}

The solution algorithm we are proposing in this section relies on
the complementarity formulation of (\ref{DiscreteVI}). More precisely
we consider the following system
\[
\left\{ \begin{array}{l}
(\dfrac{12\mu}{\tau h^{3}}\bm{u}_{h},\,\mathbf{v}_{h})+(\nabla\cdot\mathbf{v}_{h},\,p_{h})=-<p_{\Gamma},\,\mathbf{v}_{h}\cdot n>_{\Gamma}\qquad\forall\mathbf{v}_{h}\in\mathcal{V}_{h},\\
(\nabla\cdot\bm{u}_{h},\,q_{h})+(\lambda_{h},\,q_{h})=(\lambda^{old}+h-h^{old},\,q_{h})\qquad\forall q_{h}\in\mathcal{Q}_{h},\\
p_{h}\geq0,\quad p_{h}\lambda_{h}=0,\quad\lambda_{h}\geq0,\qquad\textrm{ a.e. in }\Omega.
\end{array}\right.
\]
For sake of convenience we have made the following change of variables
\[
\lambda_{h}=h(1-\theta_{h})\qquad\bm{u}_{h}=-\mathbf{u}_{h}\qquad\lambda^{old}=h^{old}(1-\theta^{old}).
\]

The matrix form of (\ref{DiscreteVI}) reads

\begin{equation}
\left\{ \begin{array}{l}
MU+BP=F_{v}\\
B^{T}U+D\Lambda=F_{q}\\
P\geq0,\quad P^{T}\Lambda=0,\quad\Lambda\geq0,
\end{array}\right.\label{MatrixDiscreteMixedSystem}
\end{equation}
where $U,$ $P$ and $\Lambda$ are the algebraic representation of
$\bm{u_{h}},$ $p_{h}$ and $\lambda_{h}$ with respect to the basis
of $\mathcal{V}_{h}$ and $\mathcal{Q}_{h}.$ The matrix blocks and
the right side vectors are given by
\[
M=\left[(\dfrac{12\mu}{\tau h^{3}}\mathbf{v_{i}},\,\mathbf{v_{j}})\right]_{i,j=1}^{n_{u}}\qquad B=\left[(\nabla\cdot\mathbf{v_{j}},\,q_{i})\right]_{i,j=1}^{n_{p},n_{u}}\qquad D=\left[(\,q_{i},\,q_{j})\right]_{i,j=1}^{n_{p}}
\]
\[
F_{v}=\left[<p_{\Gamma},\,\mathbf{v_{i}}\cdot\bm{n}>_{\Gamma}\right]_{i=1}^{n_{u}},\qquad F_{q}=\left[(\lambda^{old}+(h-h^{old}),\,q_{i})\right]_{i=1}^{n_{p}}.
\]
By $\left(\mathbf{v_{i}}\right)_{i=1}^{n_{u}}$ and $\left(q_{i}\right)_{i=1}^{n_{p}}$
we denote the basis functions of the spaces $\mathcal{V}_{h}$ and
$\mathcal{Q}_{h}$ respectively. 

Now for a fixed parameter $c>0,$ we introduce the active and inactive
sets 
\[
A:=\{i\in\{1,\dots,n_{p}\}\,|\,\Lambda_{i}-cP_{i}>0\},\qquad I:=\{i\in\{1,\dots,n_{p}\}\,|\,\Lambda_{i}-cP_{i}\leq0\}.
\]
Accordingly we split the quantities 
\[
P=\left(\begin{array}{c}
P_{I}\\
P_{A}
\end{array}\right),\quad\Lambda=\left(\begin{array}{c}
\Lambda_{I}\\
\Lambda_{A}
\end{array}\right),\quad F_{q}=\left(\begin{array}{c}
F_{qI}\\
F_{qA}
\end{array}\right),
\]
\[
B=\left(\begin{array}{cc}
B_{\bullet I} & B_{\bullet A}\end{array}\right),\quad D=\left(\begin{array}{cc}
D_{I} & 0\\
0 & D_{A}
\end{array}\right).
\]
Here and in the following $D$ is assumed to be diagonal. In the numerical
experiments Raviart-Thomas elements of lowest order $RT_{0}$ (2d-example)
or Taylor-Hood $P_{2}-P_{1}$ elements with a lumped $P_{1}$-mass
matrix are used (1d- and 2d-examples). 

Next we state the primal-dual active set strategy for solving (\ref{MatrixDiscreteMixedSystem}). 

\subsection*{Algorithm}
\begin{enumerate}
\item Choose $c>0.$ Initialize $P^{0},$ $\Lambda^{0}.$ Set $k=0.$
\item Set $A^{k}=\{i\in\{1,\dots,n_{p}\}\,|\,\Lambda_{i}^{k}-cP_{i}^{k}>0\},$
 $I^{k}=\{i\in\{1,\dots,n_{p}\}\,|\,\Lambda_{i}^{k}-cP_{i}^{k}\leq0\}.$ 
\item Solve for $\left(U^{k+1},\,P_{I}^{k+1}\right):$ 
\[
\left(\begin{array}{cc}
M & B_{\bullet I}\\
B_{\bullet I}^{T} & 0
\end{array}\right)\left(\begin{array}{c}
U^{k+1}\\
P_{I}^{k+1}
\end{array}\right)=\left(\begin{array}{c}
F_{v}\\
F_{qI}
\end{array}\right)
\]
and set 
\[
P_{A}^{k+1}=0,\quad\Lambda_{I}^{k+1}=0,\quad\Lambda_{A}^{k+1}=D_{A}^{-1}\left(F_{qA}-B_{\bullet A}^{T}U^{k+1}\right)
\]
\item Stop, or set $k=k+1,$ and return to 2.
\end{enumerate}
Hereafter we briefly comment this algorithm. For more details about
the primal-dual active set strategy with their local convergence properties
we refer to \cite{HintermItoKunisch} and the references therein.
\begin{itemize}
\item For the solution $\left(U,\,P,\,\Lambda\right)$ we have $P_{A}=0$
and $\Lambda_{I}=0.$ However since the active and inactive sets depend
on the solution itself, $A$ and $I$ are estimated during the iterations
based on the current iterate.
\item The above algorithm corresponds to a semi-smooth Newton method applied
to the nonlinear system (\ref{MatrixDiscreteMixedSystem}). In particular,
the complementarity relation 
\begin{equation}
P\geq0,\quad P\cdot\Lambda=0,\quad\Lambda\geq0,\label{complementarity}
\end{equation}
is interpreted as
\[
\bm{\Theta}(P,\,\Lambda)=\left(\Theta(P{}_{1},\,\Lambda_{1}),\cdots,\Theta(P_{n_{p}},\,\Lambda_{n_{p}})\right)=0,
\]
where $\Theta(a,b)=a-\max(0,a-cb)$ for $a,$ $b\in\mathbb{R}$ and
a fixed $c>0.$ \\
It is worth mentioning that a similar ideas has been used in \cite{Lengiewicz,Woloszynski}
with a standard finite element discretization.
\item Step 3. corresponds to the update step where a ''linearization''
of the $\max$-term is computed by using the generalized derivative
\[
\partial\max(0,\,P-c\Lambda)=\operatorname{diag}(d_{1},\ldots,d_{n_{p}})\quad\text{with }d_{i}=\left\{ \begin{array}{ll}
0 & \text{ if }P_{i}-c\Lambda_{i}\leq0,\\
1 & \text{ if }P_{i}-c\Lambda_{i}>0.
\end{array}\right.
\]
The involved linear system has a standard saddle point form. In our
numerical tests, the built-in backslash Matlab solver gives excellent
performances. However, for very fine meshes direct solvers becomes
unfeasible and iterative methods have to be investigated. For more
details about saddle point problem algorithms we refer to \cite{Benzi,Elman}.
\item The value of parameter $c$ does not influence the convergence of
the algorithm once we are sufficiently close to the solution. However
$c$ should be tuned carefully if a globalization strategy is used.
In the context of our time-dependant problem and for a reasonable
time step size, the solution at a given time is expected to provide
a good initial point for computing the solution at the next time step. 
\item As a stopping criterion we use $A^{k}=A{}^{k+1}.$ 
\end{itemize}

\section{Numerical experiments}

In this section, we assess the practical performance of the proposed
time-space discretization scheme and the solution algorithm described
above. For this purpose, a Matlab code was written using the Getfem++
library \cite{getfem}. Examples from the literature \cite{Ausas,Lengiewicz}
have been considered for validation and - for comparison purposes
- publicly available source codes is used \cite{Almqvist-new,Ausas}.

\subsection{Sinusoidal bearing profile}

In our first example, we consider a steady-state one-dimensional problem
with a hydrodynamic bearing of sinusoidal shape. The film thickness
is defined as 
\[
h(x)=h_{av}-\Delta h\cos(2\pi x/l),\qquad x\in(-l/2,\,l/2)
\]
where $h_{av}=0.02$ mm, $\Delta h=0.005$ mm and $l=125$ mm. The
sliding speed is $U=4$ m/s, the lubricant viscosity is $\mu=0.015$
Pa.s and the cavitation pressure is $p_{c}=0$ MPa. At the boundary
the pressure is prescribed by $p(\pm l/2)=1$ MPa. The computation
is stopped as soon as the difference in the maximum norm between two
successive pressures is less than $10^{-12}$ with a time step $\tau=10^{-4}.$
Figure \ref{fig-ex1-sol} depicts the numerical solutions computed
on uniform grids of size $dx=1/1000.$ An excellent agreement has
been found between the present formulation and the one proposed in
\cite{Giacopini-et-al}. 

\begin{figure}
\begin{centering}
\includegraphics[width=10cm,height=6cm]{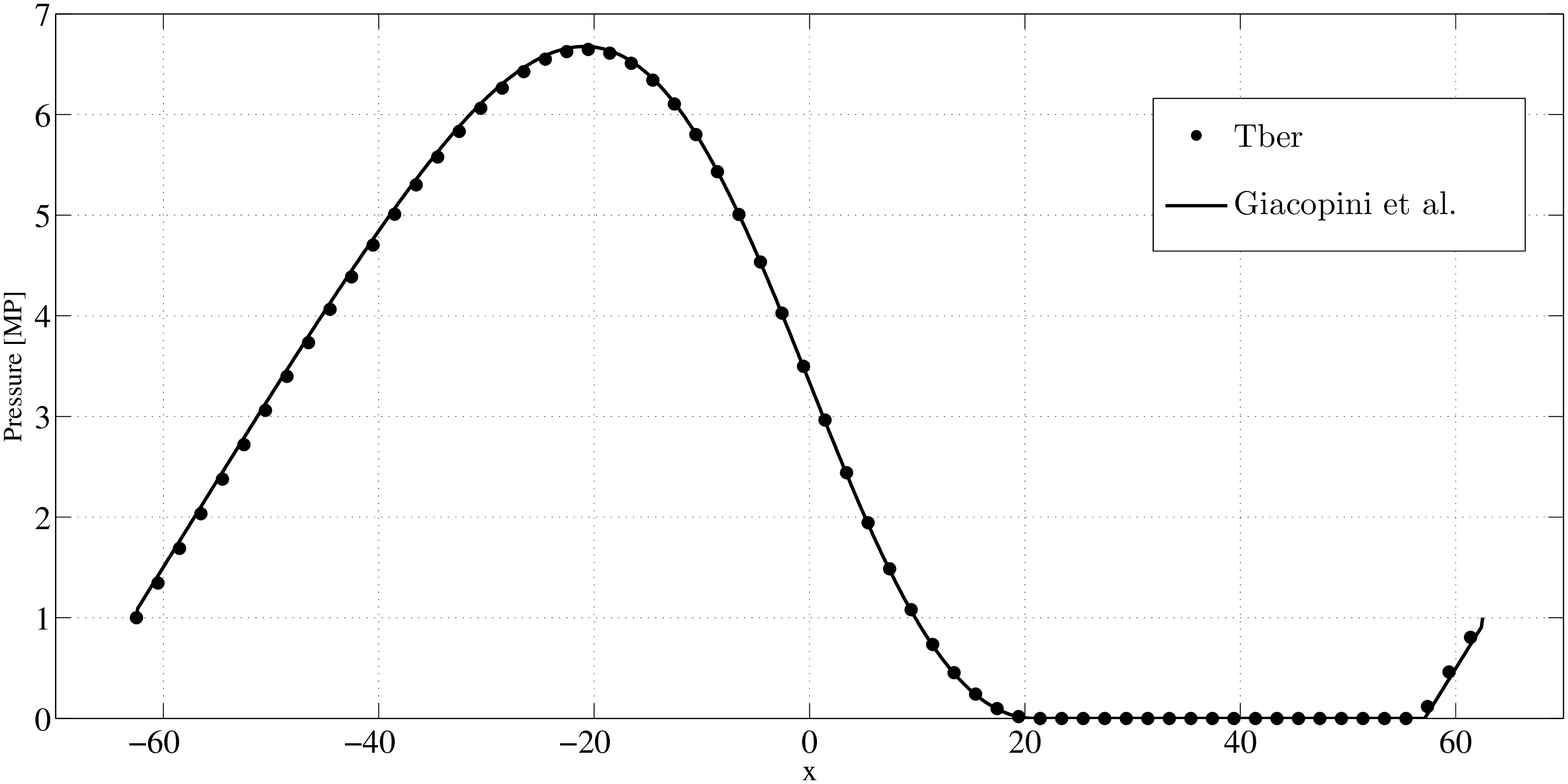}
\par\end{centering}
\begin{centering}
\includegraphics[width=10cm,height=4cm]{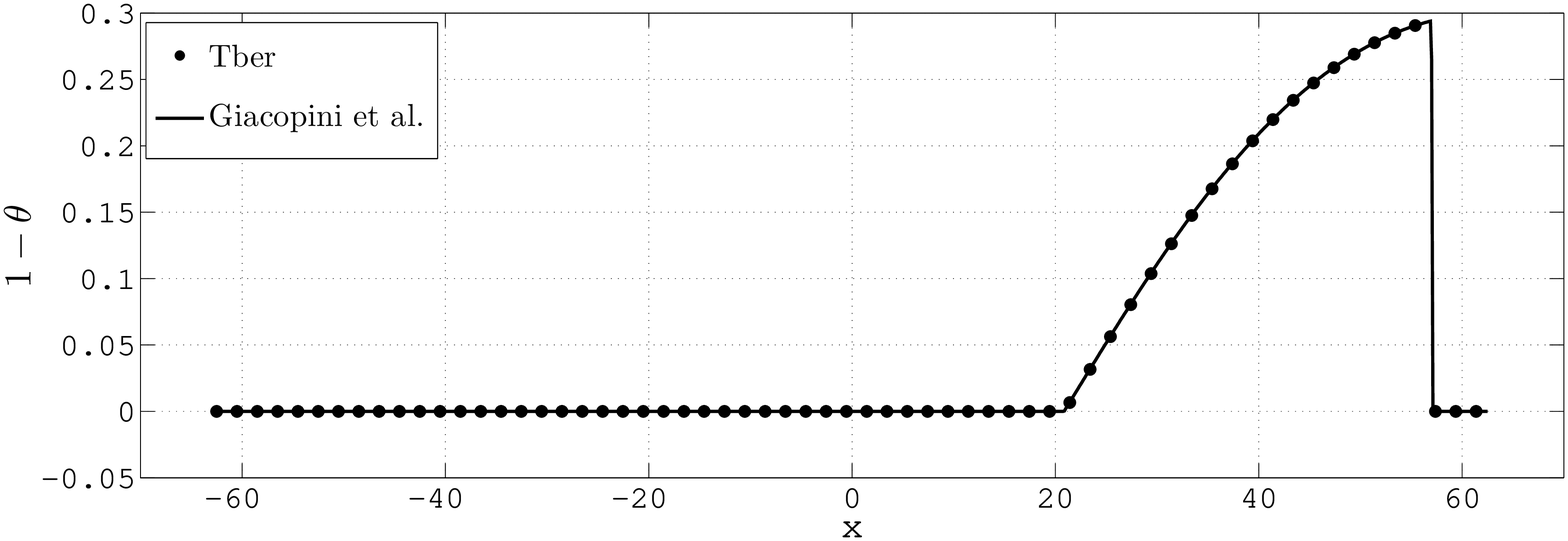}
\par\end{centering}
\caption{\label{fig-ex1-sol}Solution of the 1D sinusoidal bearing problem:
pressure (top) and void fraction (bottom)}

\end{figure}

\subsection{Oscillatory squeeze flow }

For the transient case we consider the classical example of two parallel
plates in pure squeeze motion $(U=0)$ and separated by a lubricant
with constant viscosity. The dimensionless formulation as presented
in \cite{Ausas} is used with a computational domain $\Omega=(0,\,1)$
and a film thickness $h(x,t)=H(t)=0.125\cos(4\pi t)+0.375.$ On the
boundary $p(0,\,t)=p(1,\,t)=0.025.$ The initial condition is given
by $\theta_{0}(x)=1.$ For the numerical solution, a $450$-elements
domain has been employed and the simulation time has been divided
into $3000$ steps. Figure \ref{fig-ex2-sol} shows the variation
in time of the extent of the cavitation zone. In comparison to Ausas
et al. algorithm and the analytic solutions given in \cite{Ausas}
a very good agreement was achieved. As mentioned there, the step character
of the numerical result is due to discretization and could be made
smoother by increasing the numbers of time steps and the points used
for the discretization of the radius of cavitation area. In most stages
of the simulation the active set solver converges in few iterations,
though a large number of iterations is necessary during the transition
from no-cavitation to with-cavitation phases.

\begin{figure}
\begin{centering}
\includegraphics[width=8cm,height=4cm]{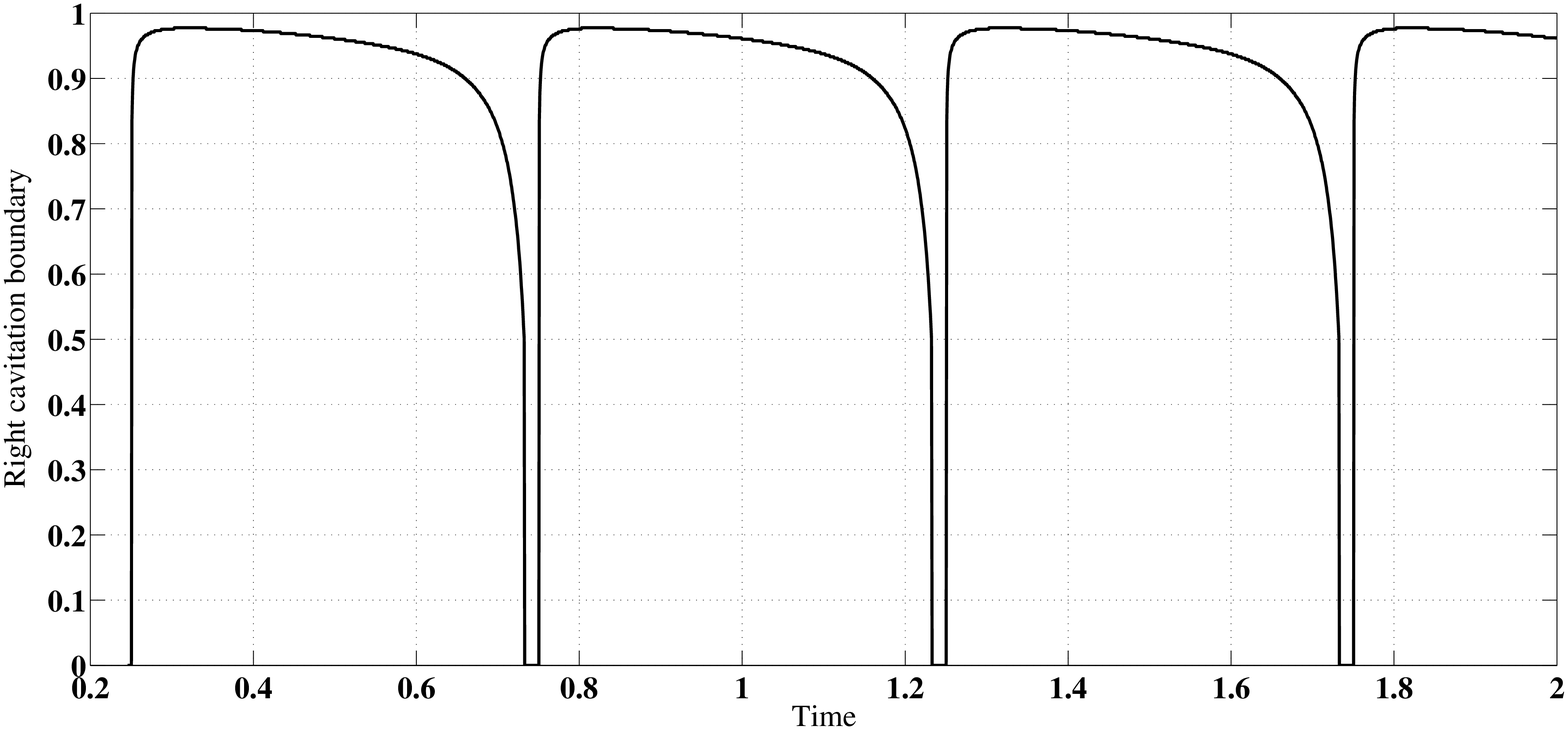}
\par\end{centering}
\begin{centering}
\includegraphics[width=8cm,height=4cm]{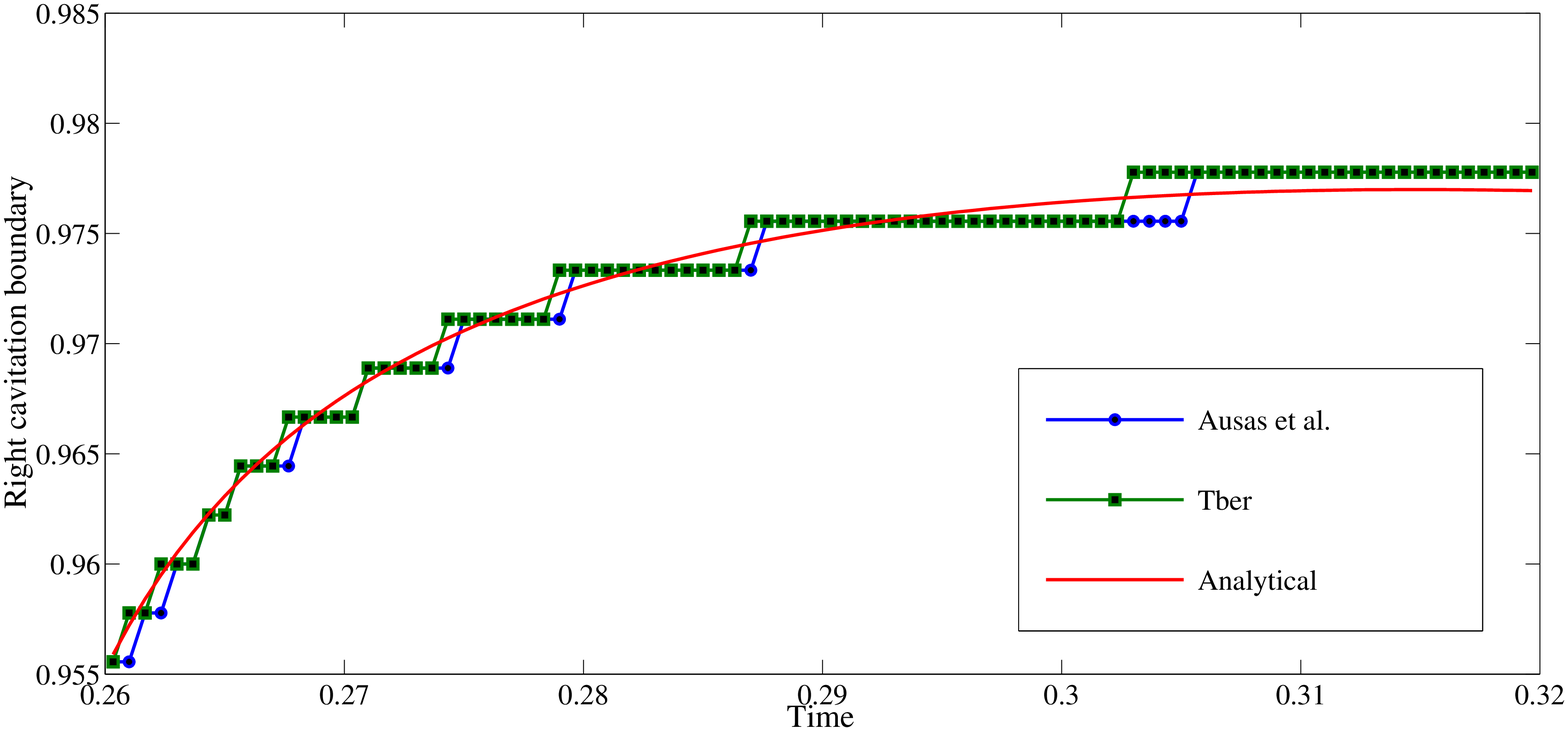}
\par\end{centering}
\begin{centering}
\includegraphics[width=8cm,height=4cm]{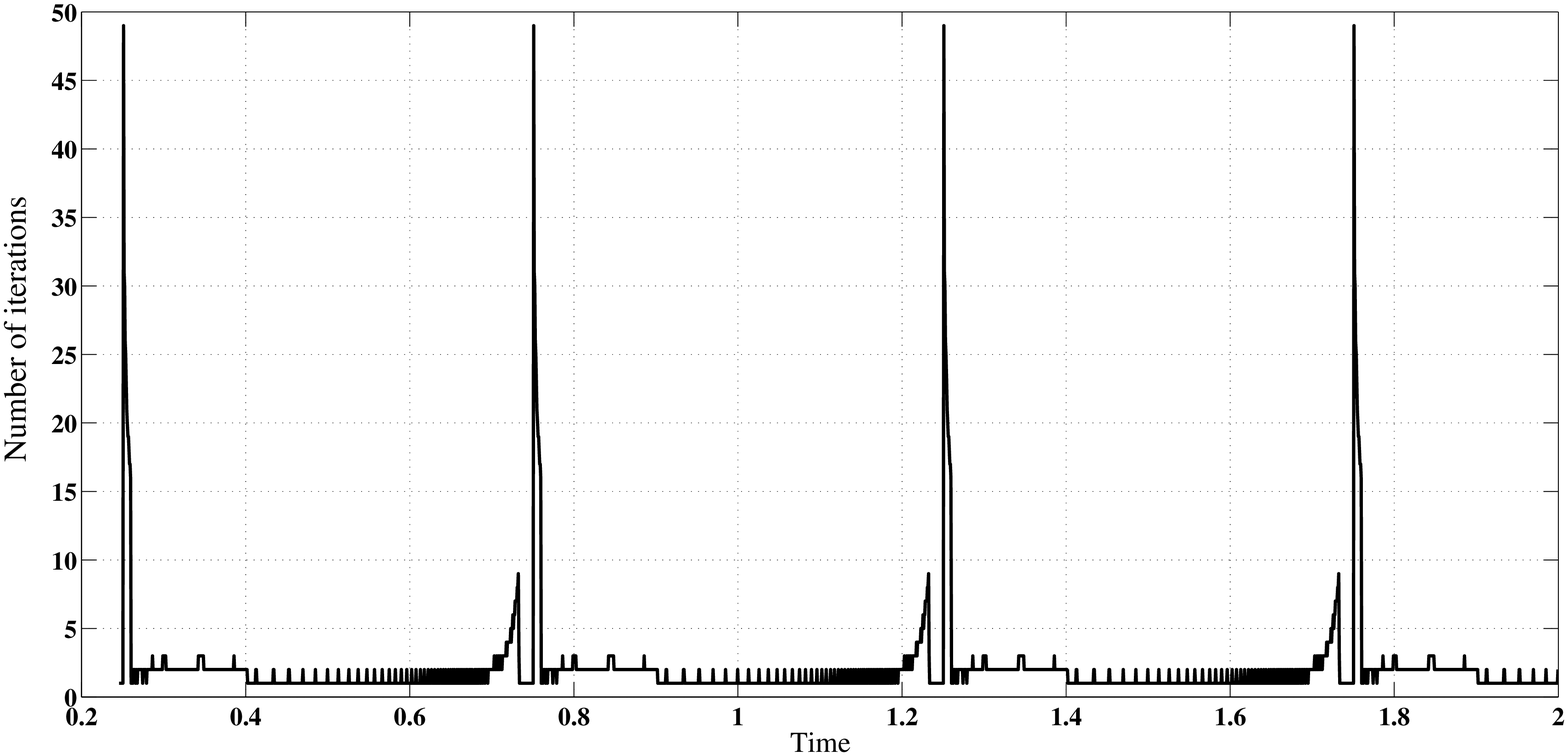}
\par\end{centering}
\caption{\label{fig-ex2-sol}Pure squeeze problem: (top) time evolution of
the right cavitation boundary (middle) comparison with Ausas et al.
algorithm and the analytic method (bottom) number of the active set
algorithm iterations with respect to time steps.}
\end{figure}

\subsection{Sinusoidal bearing profile in 2D}

The sinusoidal bearing profile of the first example is extended here
to two dimensions by considering 
\[
h(x)=h_{av}-\Delta h\cos(2\pi x_{1}/l),\qquad x\in(-l/2,\,l/2)\times(-l/2,\,l/2)
\]
and a constant pressure on the whole boundary $p=1$ MPa. The steady
state pressure and void fraction profiles obtained using $RT_{0}$
elements on a structured mesh with $100\times100$ elements are shown
in figure \ref{fig-ex3-sol}. In the same figure, the number of iterations
of the active set algorithm until successful termination is provided
at each time step for different grids. Up to the first time step,
the proposed algorithm converges in less than $10$ iterations regardless
of the mesh size. For the first time step a globalization strategy
might be investigated to deal with the large number iterations. In
fact, the standard result on the convergence of semi\textendash smooth
Newton methods requires an initial iteration sufficiently close to
the solution. In figure \ref{fig-ex3-sol-TH} the computational results
obtained using Taylor-Hood elements are shown. For this particular
example better results are obtained. With $RT_{0}$ elements, the
void fraction profile exhibits oscillations in the active-inactive
sets boundary. Naturally with $P2-P1$ elements the cost in terms
of computation time and memory is higher. 

\begin{figure}
\begin{centering}
\includegraphics[width=10cm,height=6cm]{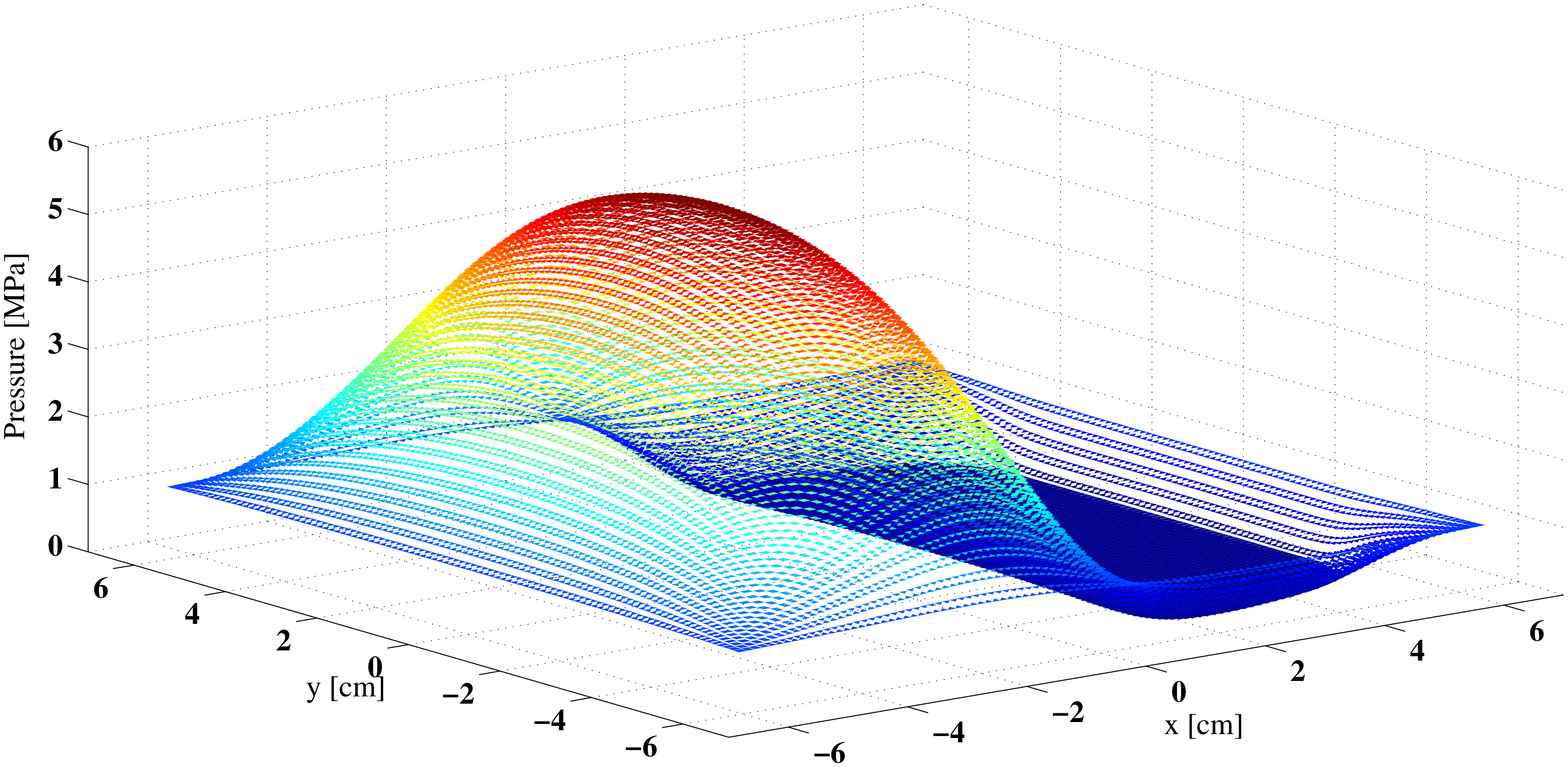}
\par\end{centering}
\begin{centering}
\includegraphics[width=10cm,height=6cm]{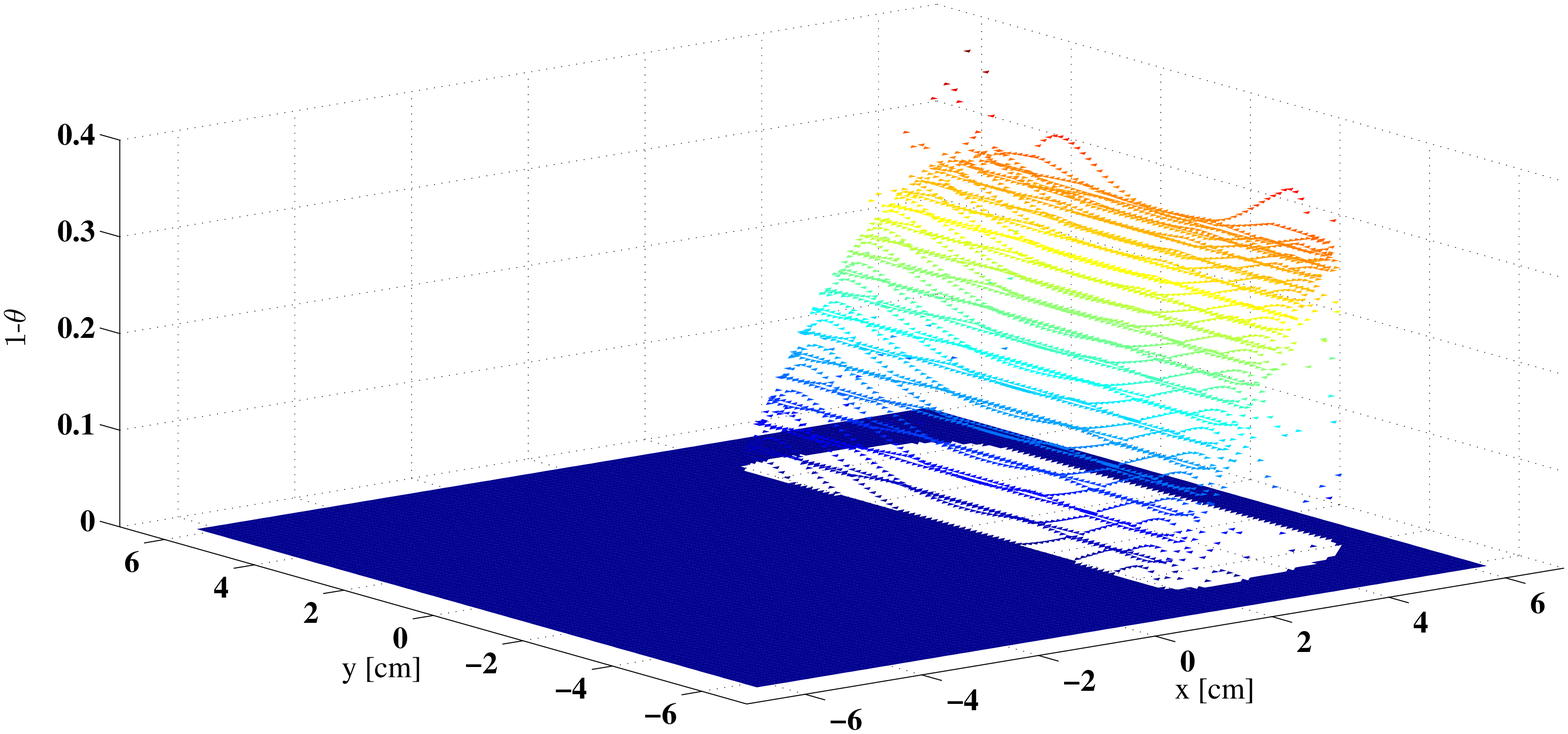}
\par\end{centering}
\begin{centering}
\includegraphics[width=10cm,height=6cm]{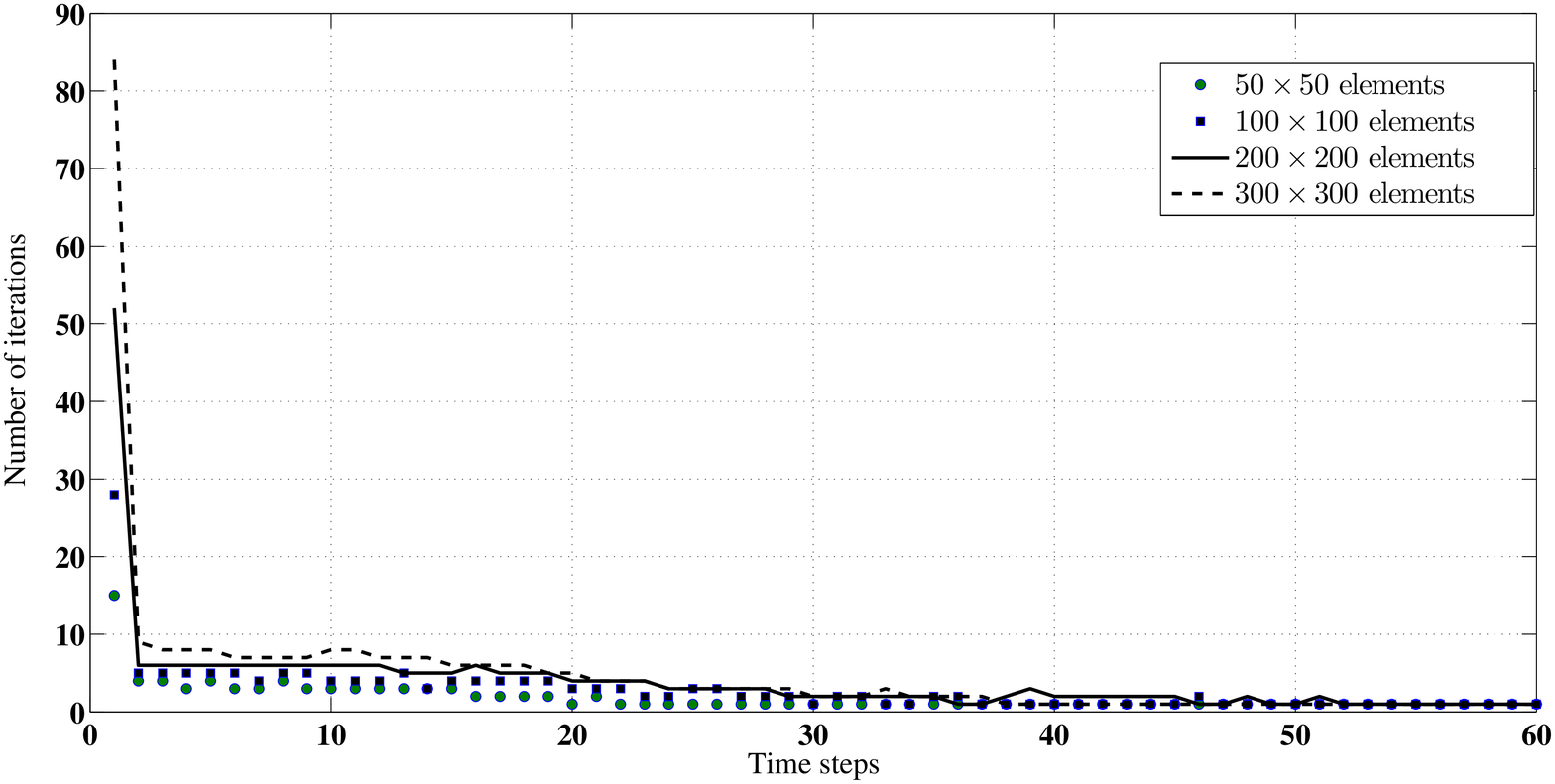}
\par\end{centering}
\caption{\label{fig-ex3-sol}2D sinusoidal profile: (top) pressure (middle)
void fraction (bottom) number of the active set solver iterations
with respect to time steps.}
\end{figure}

\begin{figure}
\begin{centering}
\includegraphics[width=10cm,height=6cm]{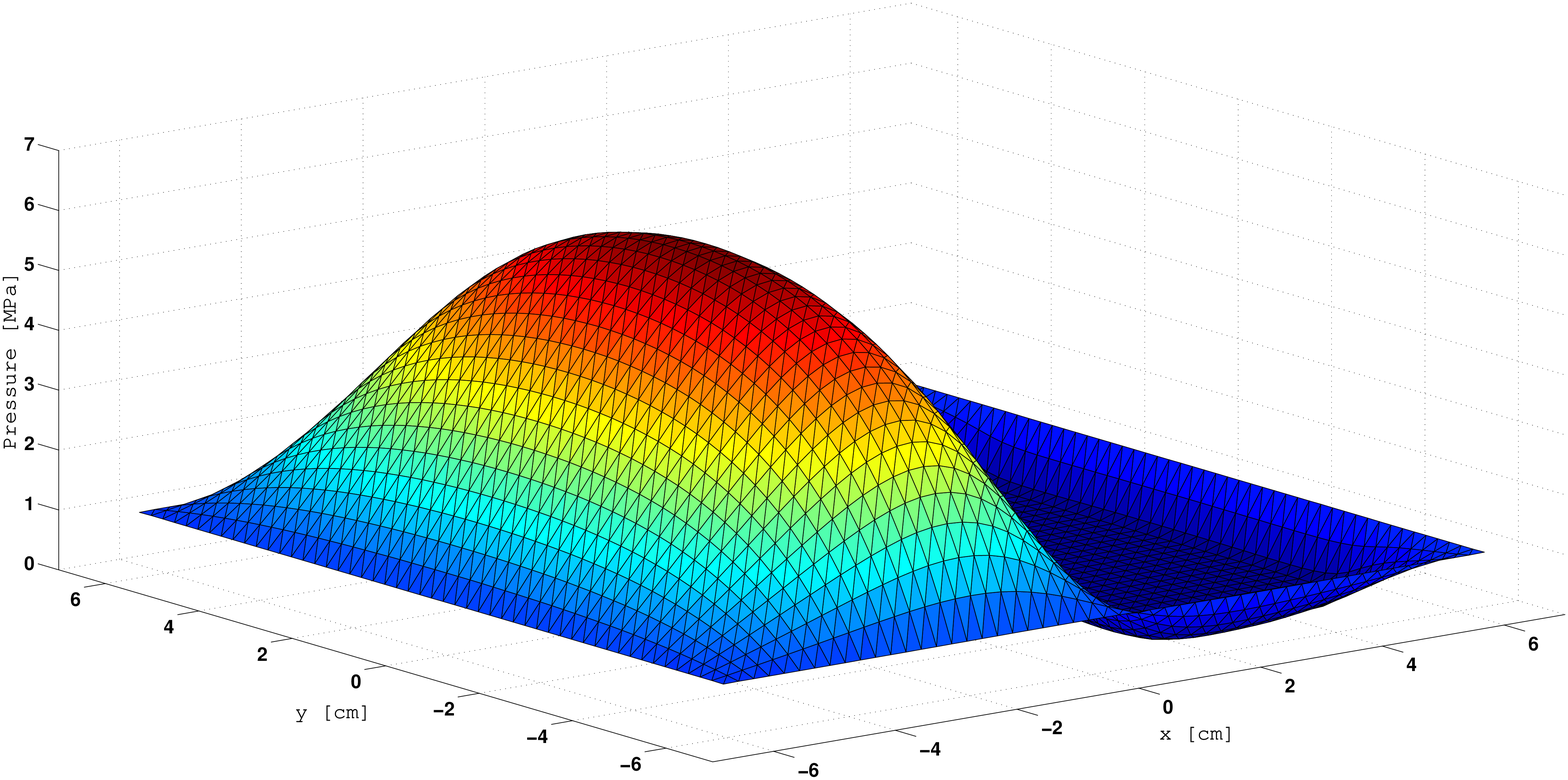}
\par\end{centering}
\begin{centering}
\includegraphics[width=10cm,height=6cm]{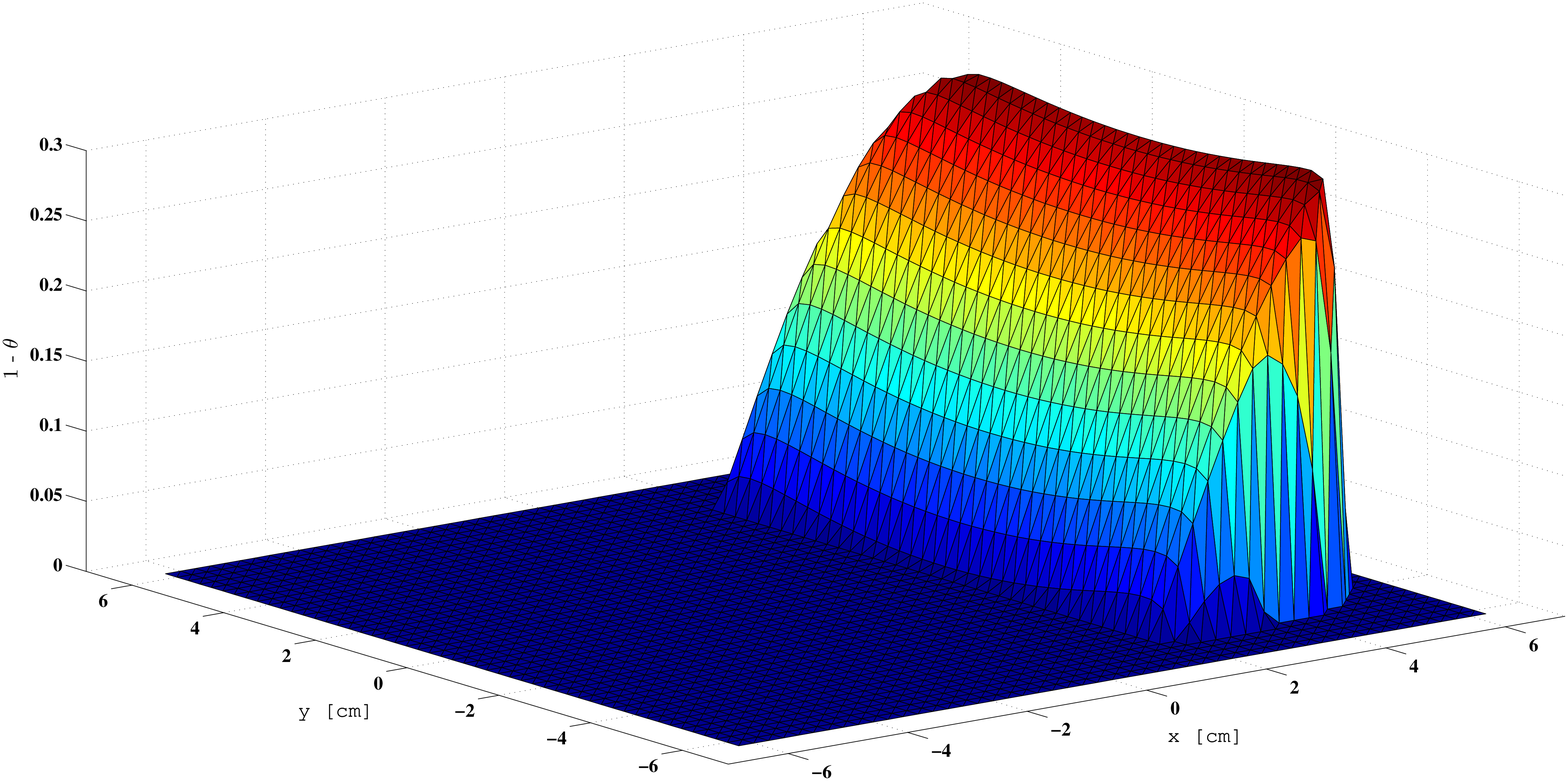}
\par\end{centering}
\centering{}\caption{\label{fig-ex3-sol-TH}2D sinusoidal profile with Taylor-Hoods elements:
pressure (top) void fraction (bottom).}
\end{figure}

\end{document}